\providecommand{\U}[1]{\protect\rule{.1in}{.1in}}
\numberwithin{equation}{section}
\newtheorem{theorem}{Theorem}[section]
\newtheorem{proposition}[theorem]{Proposition}
\newtheorem{corollary}[theorem]{Corollary}
\newtheorem{lemma}[theorem]{Lemma}
\begin{document}
\title[Upper semicontinuous hyperviscous flow]{Upper semicontinuous attractors for 3D hyperviscous flow}
\author{Abdelhafid Younsi}
\address{Department of Mathematics and Computer Science , University of Djelfa , Algeria.}
\email{younsihafid@gmail.com}
\subjclass[2000]{35-xx, 76Dxx, 76D05, 35D-xx, 35B41, 35B40.}
\keywords{Navier Stokes equations; attractor; upper semicontinuity; hyperviscosity.}

\begin{abstract}
We regularized the 3D Navier-Stokes equations by adding a high-order viscosity
term. We first prove the existence of the global attractors of the Leray-Hopf
weak solutions of the regularized 3D Navier-Stokes equations and then we study
the upper semicontinuity, as the artificial dissipation $\varepsilon$ goes to
$0$. We also consider applications of obtained results to the regularized
problem by allowing the family of forcing functions to vary with $\varepsilon
$, for $\varepsilon$ $>0$.

\end{abstract}
\maketitle

\section{Introduction}

In this paper, we study the robustness, or upper semicontinuity of the global
attractors of the Leray-Hopf weak solutions of modified three dimensional
Navier-Stokes equations. We regularized the 3D Navier-Stokes system by adding
a high order artificial viscosity term to the conventional system%
\begin{equation}%
\begin{array}
[c]{c}%
\dfrac{\partial u^{\varepsilon}}{\partial t}+\varepsilon(-\triangle
)^{l}u^{\varepsilon}-\nu\triangle u^{\varepsilon}+\left(  u^{\varepsilon
}.\nabla\right)  u^{\varepsilon}+\nabla p=f\left(  x\right)  ,\text{ in
}\Omega\times\left(  0,\infty\right) \\
\text{div }u^{\varepsilon}=0,\text{ in }\Omega\times\left(  0,\infty\right)
\text{, }u^{\varepsilon}\left(  x,0\right)  =u_{0}^{\varepsilon},\text{ in
}\Omega\text{, \ }\\
p(x+Le_{i},t)=p(x,t),\text{ }u^{\varepsilon}(x+Le_{i},t)=u^{\varepsilon
}(x,t)\text{\ \ }i=1,2\text{ },3,\ t\in\left(  0,\infty\right)
\end{array}
\label{1}%
\end{equation}
where $\Omega=(0,L)^{3}$ with periodic boundary conditions and $\left(
e_{1},...,e_{d}\right)  $ is the natural basis of $%
\mathbb{R}
^{d}$. Here $\varepsilon>0$ is the artificial dissipation parameter,
$u^{\varepsilon}\ $is the velocity vector field, $p$ is the pressure, $\nu>0$
is the kinematic viscosity of the fluid and $f$ is a given force field. For
$\varepsilon=0$, the model is reduced to 3D Navier--Stokes system.

Mathematical model for such fluid motion has been used extensively in
turbulence simulations (see e.g. \cite{4, 5, 10}). For further discussion of
theoretical results concerning (%
\ref{1}
), see \cite{1, 2, 6, 13, 16, 17, 20,23}.

In the work \cite{23}, the strong convergence of the solution of this problem
to the solution of the conventional system as the regularization parameter
goes to zero, was established for each dimension $d\leq4$.

For the 3D Navier--Stokes system weak solutions of problem are known to exist
by a basic result by J. Leray from 1934 \cite{11}, only the uniqueness of weak
solutions remains as an open problem. Then the known theory of global
attractors of infinite dimensional dynamical systems is not applicable to the
3D Navier--Stokes system.

The theory of trajectory attractors for evolution partial differential
equations was developed in \cite{15, 18}, which the uniqueness theorem of
solutions of the corresponding initial-value problem is not proved yet, e.g.
for the 3D Navier--Stokes system (see \cite{8a, 15, 17a, 18}). Such trajectory
attractor is a classical global attractor but in the space of weak solutions
defined on $[0,$ $\infty)$, with the corresponding semigroup being simply the
translation in time of such solutions. A compact set $\mathfrak{A}\Subset E$
is said to be a global attractor of a semigroup $\{S(t),t>0\}$ acting in a
Banach or Hilbert space $E$ if $\mathfrak{A}$ is strictly invariant with
respect to $\{S(t)\}:S(t)\mathfrak{A}=\mathfrak{A}$ $\forall t\geq0$ and
$\mathfrak{A}$ attracts any bounded set $B\subset E:dist(S(t)B$,
$\mathfrak{A})\rightarrow0$ ($t\rightarrow\infty$) (see \cite{14}, \cite{15},
\cite{17a}, \cite{18}, \cite{20}).

In this article, we study the upper semicontinuity, of the global attractors
of the Leray-Hopf weak solutions of a regularized 3D Navier-Stokes equations,
as the artificial dissipation $\varepsilon$ goes to $0$. While there exist
other examples of such robustness in the literature of the Navier-Stokes
equations, the specific emphasis on the regularized problem is new for the 3D
Navier-Stokes equations and is of interest. This would bean extension of the
earlier work on Ou and Sritharan for the 2D Navier-Stokes equations, see
references \cite{16} and \cite{17}. It is now known that there is a global
attractor $\mathfrak{A}_{0}$ for the Leray-Hopf weak solutions of the 3D
Navier-Stokes equations, see Sell \cite{17a, 18} and Chepyzhov \cite{13a}.

The main object of this paper to show that there is a global attractor, which
one might denote by $\mathfrak{A}_{\varepsilon}$, for the regularized problem
(%
\ref{1}
), and that the family \{$\mathfrak{A}_{\varepsilon}$\} is upper
semicontinuous at $\varepsilon=0$. Moreover, we can modify the argument
described above so that the final result will have broader applicability by
allowing the family of forcing functions $f^{\varepsilon}$ to vary with
$\varepsilon$, for $\varepsilon>0$.

The family of sets $\mathfrak{A}_{\varepsilon}$, $0<\varepsilon\leq1$ is
robust at $\mathfrak{A}_{0}$, or is upper semicontinuous with respect to
$\varepsilon$ at $\varepsilon_{0}=0$, provided that, for every $\varepsilon
_{0}>0$, there is a neighborhood $O\left(  \varepsilon_{0}\right)  $ of $0\in%
\mathbb{R}
$ and a neighborhood $N_{\varepsilon_{0}}(\mathfrak{A}_{0})\ $of
$\mathfrak{A}_{0}$, such that $\mathfrak{A}_{\varepsilon}\subset
N_{\varepsilon_{0}}(\mathfrak{A}_{0})$, for every $\varepsilon\in O\left(
\varepsilon_{0}\right)  $ with $\varepsilon>0$, see (23.13) in \cite{18}.

The paper is organized as follows. In Section 2, we present the relevant
mathematical framework for the paper. In Section 3, we recall the definition
of the trajectory attractor $\mathfrak{A}_{0}$ of the conventional 3-D
Navier-Stokes equations. In Section 4, we study the regularized problem (see
equation (%
\ref{1}
)), then we show the existence of trajectory attractor $\mathfrak{A}%
_{\varepsilon}$. In Section 5, we present the main result of this paper, that
is, a theorem on the upper semicontinuity on the attractors $\mathfrak{A}%
_{\varepsilon}$. Finally, an application of our general results to the study
of the robustness of the system (%
\ref{1}
)\ with a perturbed external force.

\section{Preliminary}

We denote by $H_{per}^{m}\left(  \Omega\right)  $, the Sobolev space of
$L$-periodic functions endowed with the inner product
\[
\left(  u,v\right)  =%
{\textstyle\sum\limits_{\left\vert \alpha\right\vert \leq m}}
(D^{\alpha}u,D^{\alpha}v)_{L^{2}\left(  \Omega\right)  }\text{ and the norm
}\left\Vert u\right\Vert _{m}=%
{\textstyle\sum\limits_{\left\vert \alpha\right\vert \leq m}}
(\left\Vert D^{\alpha}u\right\Vert _{L^{2}\left(  \Omega\right)  }^{2}%
)^{\frac{1}{2}}.
\]
We define the spaces $V_{m}$ as completions of smooth, divergence-free,
periodic, zero-average functions with respect to the $H_{per}^{m}$ norms.
$V_{m}^{\prime}$ denote the dual space of $V_{m}$ and $V$ denote the space
$V_{0}$.

We present the topology to be used for generating the neighborhood\ of
robustness. Let $F$ any vector space. A metric $d\left(  f,g\right)  $ on $F$
is said to be invariant if%
\[
d\left(  f,g\right)  =d\left(  f-g,0\right)  \text{ for all }f\text{, }g\in
F.
\]

A Fr\'{e}chet space is a complete topological vector space whose topology is
induced by a translation invariant metric $d\left(  f,g\right)  $. Given a
Banach space $X$,\ with norm $\left\Vert .\right\Vert _{X}$ and $1\leq
p<\infty$ , we denote by $L_{loc}^{p}\left[  0,\infty;X\right)  $ the
Fr\'{e}chet space of mesurable functions $f:\left[  0,\infty\right)
\rightarrow X$ that are $p$-integrable over $\left[  0,T\right]  $,\ for each
$0<T<\infty$ , endow with the metric%
\[
d\left(  f,g\right)  =%
{\displaystyle\sum\limits_{n=1}^{\infty}}
2^{-n}\min(\left\Vert f-g\right\Vert _{L^{p}\left(  0,\text{ }n;\text{
}X\right)  },1).
\]
We denote by $L_{loc}^{p}\left(  0,\infty;X\right)  $ the Fr\'{e}chet space of
mesurable functions $f:\left(  0,\infty\right)  \rightarrow\nolinebreak X$
that are $p$-integrable over $\left[  t_{0},T\right]  $,\ for each
$0<t_{0}\leq T<\infty$ endow with the metric%
\[
d\left(  f,g\right)  =%
{\displaystyle\sum\limits_{n=2}^{\infty}}
2^{-n}\min(\left\Vert f-g\right\Vert _{L^{p}\left(  \frac{1}{n},\text{
}n;\text{ }X\right)  },1).
\]
Similarly for $p=\infty$, we will let $L_{loc}^{\infty}\left(  0,\infty
;X\right)  $ denote the collection of all functions $f:(0,\infty)\rightarrow
X$ with the property that, for all $\tau$ and $T$ with $0<T<\infty$ , one has
$\operatorname*{ess}\sup\limits_{0<s<T}\left\Vert f\right\Vert _{X}<\infty$.
We denote by $C\left[  0,\infty;X\right)  $ the space of strongly continuous
functions from $\left[  0,\infty\right)  $ to $X$, endowed with the topologie
of the uniform convergence over compact sets and by $C_{w}\left[
0,\infty;X\right)  $ the space of weakly continuous functions from $\left[
0,\infty\right)  $ to $X$. We denote by $L^{\infty}C=L^{\infty}\left(
\mathbb{R}
,X\right)  \cap C\left(
\mathbb{R}
,X\right)  $ the Fr\'{e}chet space $L^{\infty}C$ endow with the $L_{loc}%
^{\infty}-$topology, wich is the topology of uniform convergence on bounded sets.

Let $E$ be a complete metric space with metric $d$. We write $B_{r}$ for the
open ball centre $0\in E$ and radius $r$. The following quantity is called the
Hausdorff (non-symmetric) semidistance from a set $X$ to a set $Y$ in a Banach
space $E$
\[
dist_{E}\left(  X,Y\right)  =\sup\limits_{x\in X}\inf\limits_{y\in
Y}\left\Vert .\right\Vert _{E}.
\]
Let $M$ be a subset of $E$ and let $%
\mathbb{R}
^{+}=\left[  0,\infty\right)  $. A mapping $\sigma=\sigma\left(  u,t\right)
$, where $\sigma:M\times\left[  0,\infty\right)  \rightarrow M$ is said to be
a semiflow on $M$ provided the following hold\newline1) $\sigma\left(
w,0\right)  =w$, for all $w\in M.$\newline2) The semigroup property holds, i.
e,%
\[
\sigma\left(  \left(  w,s\right)  ,t\right)  =\sigma\left(  w,s+t\right)
\text{ for all }w\in M\text{ and }s\text{, }t\in%
\mathbb{R}
^{+}.
\]
3) The mapping $\sigma:M\times\left(  0,\infty\right)  \rightarrow M$ is continuous.

If in addition the mapping $\sigma:M\times\left[  0,\infty\right)  \rightarrow
M$ is continuous we will say that the semiflow is continuous at $t=0$. Here we
use $t>0$ in order that the Robustness Theorem 23.14 in \cite{18} is valid,
see Sell \cite{18} and Hale \cite{8a}. For \ any $u\in M$ the positive
trajectory through $u$ is defined as the set $\gamma^{+}\left(  u\right)
=\left\{  \sigma\left(  t\right)  u,\text{ }t\geq0\right\}  $. For any set
$B\subset M$ we define the positive hull $\mathcal{H}^{+}\left(  B\right)  $
and the omega limit set $\omega\left(  B\right)  $ as follows%
\[
\mathcal{H}^{+}\left(  B\right)  =Cl_{M}\gamma^{+}\left(  B\right)  \text{ and
}\omega\left(  B\right)  =\cap_{\tau\geq0}\mathcal{H}^{+}\left(  \sigma\left(
\tau\right)  B\right)  .
\]
If $\mathcal{A}\subset E$ and $\varepsilon>0$ we write
\[
N_{\varepsilon}(\mathcal{A})=\{z\in E,\text{ }\inf_{a\in\mathcal{A}}d\left(
z,a\right)  <\varepsilon\}.
\]
for the open $\varepsilon-$neighbourhood of $\mathcal{A}$.

We denote by $A$ the Stokes operator $Au=-\triangle u$ for $u\in D\left(
A\right)  .$We recall that the operator $A$ is a closed positive self-adjoint
unbounded operator, with\newline$D\left(  A\right)  =\left\{  u\in
V_{0}\text{, }Au\in V_{0}\right\}  $. We have in fact, $D\left(  A\right)
=V_{2}$. The spectral theory of $A$ allows us to define the powers $A^{l}$ of
$A$ for $l\geq1$, $A^{l}$ is an unbounded self-adjoint operator in $V_{0}$
with a domain $D(A^{l})$ dense in $V_{2}\subset V_{0}$. We set here%
\[
A^{l}u=\left(  -\triangle\right)  ^{l}u\text{\ for }u\in D\left(
A^{l}\right)  =V_{2l}\text{.}%
\]
The space $D\left(  A^{l}\right)  $ is endowed with the scalar product and the
norm
\[
\left(  u,v\right)  _{D(A^{l})}=(A^{l}u,A^{l}v),\left\Vert u\right\Vert
_{D(A^{l})}=\{\left(  u,u\right)  _{D(A^{l})}\}^{\frac{1}{2}}.
\]
Now define the trilinear form $b(.,.,.)$ associated with the inertia terms
\[
b\left(  u,v,w\right)  =\sum_{i,j=1}^{3}%
{\displaystyle\int\limits_{\Omega}}
u_{i}\frac{\partial v_{j}}{\partial x_{_{i}}}w_{j}dx
\]
Recall that for $u$ satisfying $\nabla.u=0$ we have%
\begin{equation}
b\left(  u,u,u\right)  =0\text{ and }b\left(  u,v,w\right)  =-b\left(
u,w,v\right)  \text{.} \label{a1}%
\end{equation}
Hereafter, $c_{i}\in%
\mathbb{N}
$ ,will denote a dimensionless scale invariant positive constant which might
depend on the shape of the domain. The trilinear form $b\left(  .,.,.\right)
$ is continuous on $V_{m_{1}}\left(  \Omega\right)  \times V_{m_{2}+1}\left(
\Omega\right)  \times V_{m_{3}}\left(  \Omega\right)  $, $m_{i=1,2,3}\geq0$
\begin{equation}
\left\vert b\left(  u,v,w\right)  \right\vert \leq c_{0}\left\Vert
u\right\Vert _{m_{1}}\left\Vert v\right\Vert _{m_{2}+1}\left\Vert w\right\Vert
_{m_{3}}\text{ \ },\text{ }m_{3}+m_{2}+m_{1}\geq\frac{3}{2} \label{2c}%
\end{equation}
see \cite{7, 18}. The continuity property of the trilinear form enables us to
define (using Riesz representation theorem) a bilinear continuous operator
$B\left(  u,v\right)  $; $V_{2}\times V_{2}\rightarrow V_{2}^{\prime}$ will be
defined by
\[
\langle B\left(  u,v\right)  ,w\rangle=b\left(  u,v,w\right)  ,\text{ }\forall
w\in V_{2}\text{.}%
\]
We recall some inequalities that we will be using in what follows.

Young's inequality%
\begin{equation}
ab\leq\frac{\epsilon}{p}a^{p}+\frac{1}{q\epsilon^{\frac{q}{p}}}b^{q}%
,a,b,\epsilon>0,p>1,q=\frac{p}{p-1}. \label{a2}%
\end{equation}
Poincar\'{e}'s inequality%
\begin{equation}
\lambda_{1}\left\Vert u\right\Vert ^{2}\leq\Vert u\Vert_{1}^{2}\text{\ for all
}u\in V_{0}\text{,} \label{2}%
\end{equation}
where $\lambda_{1}$ is the smallest eigenvalue of the Stokes operator $A$.

\section{Navier-Stokes equations}

The conventional Navier-Stokes system can be written in the evolution form%
\begin{equation}%
\begin{array}
[c]{c}%
\dfrac{\partial u}{\partial t}+\nu Au+B\left(  u,u\right)  =f,\text{ }t>0,\\
\text{div }u=0,\text{ in }\Omega\times\left(  0,\infty\right)  \text{ and
}u\left(  x,0\right)  =u_{0},\text{ in }\Omega\text{,\ }%
\end{array}
\label{3}%
\end{equation}

let $f\in L^{\infty}\left(  0,\infty;V_{0}\right)  $ be given. We will say
that a function $u$ is a weak solution of the 3D Navier-Stokes of Class $LH$
(Leray--Hopf ) on $\left[  0,\infty\right)  $ provided that $u\left(
x,0\right)  =\nolinebreak u_{0}\left(  x\right)  \in V_{0}$, and the following
properties hold

1) $u\in L^{\infty}\left(  0,\infty;V_{0}\right)  \cap L_{loc}^{2}\left[
0,\infty;V_{1}\right)  $.

2) $\dfrac{du}{dt}\in\lbrack L_{loc}^{\frac{4}{3}}0,\infty;V_{1}^{^{\prime}})$.

Taking the inner product of (\text{%
\ref{3}
)} with $u$, and using (\text{%
\ref{a2}
)} we have%
\begin{equation}
\frac{d}{dt}\left\Vert u\left(  t\right)  \right\Vert ^{2}+2\nu\left\Vert
\nabla u\right\Vert ^{2}=2\left\langle f,u\right\rangle . \label{4}%
\end{equation}
by application of Young's inequality and the Poincar\'{e}'s Lemma, yields%
\begin{equation}
\frac{d}{dt}\left\Vert u\left(  t\right)  \right\Vert ^{2}+\nu\left\Vert
\nabla u\right\Vert ^{2}\leq\frac{\left\Vert f\right\Vert ^{2}}{\nu\lambda
_{1}}, \label{5}%
\end{equation}
using the Poincar\'{e} Lemma and Gronwall's inequality, to get%
\[
\left\Vert u\left(  t\right)  \right\Vert ^{2}\leq e^{-\nu\lambda_{1}\left(
t-t_{0}\right)  }\left\Vert u\left(  t_{0}\right)  \right\Vert ^{2}+\frac
{1}{\nu^{2}\lambda_{1}^{2}}\left\Vert f\right\Vert ^{2}\left(  1-e^{-\nu
\lambda_{1}\left(  t-t_{0}\right)  }\right)  \text{,with }0<t_{0}<t,
\]
3) which implies that%
\begin{equation}
\left\Vert u\left(  t\right)  \right\Vert ^{2}\leq e^{-\nu\lambda_{1}\left(
t-t_{0}\right)  }\left\Vert u\left(  t_{0}\right)  \right\Vert ^{2}+\frac
{1}{\nu^{2}\lambda_{1}^{2}}\left\Vert f\right\Vert ^{2}. \label{6}%
\end{equation}
Integrating (%
\ref{4}
) over $\left[  t_{0},t\right]  $ we find that%
\begin{equation}
\left\Vert u\left(  t\right)  \right\Vert ^{2}+2\nu\int_{t_{0}}^{t}\Vert
A^{\frac{1}{2}}u\left(  s\right)  \Vert^{2}ds\leq\left\Vert u\left(
t_{0}\right)  \right\Vert ^{2}+2\int_{t_{0}}^{t}\left\langle f\left(
s\right)  ,u\left(  s\right)  \right\rangle ds. \label{7}%
\end{equation}
4) The function $u$ satisfies the following equality%
\begin{equation}
\left\langle u\left(  t\right)  -u\left(  t_{0}\right)  ,v\right\rangle
+\nu\int_{t_{0}}^{t}\langle A^{\frac{1}{2}}u\left(  s\right)  ,A^{\frac{1}{2}%
}v\rangle ds+\int_{t_{0}}^{t}\langle B\left(  u\left(  s\right)  ,u\left(
s\right)  \right)  ,v\rangle ds=\int_{t_{0}}^{t}\left\langle f,v\right\rangle
ds, \label{8}%
\end{equation}
for all $v\in V_{1}$ and for all $t\geq t_{0}\geq0$.

The proof of the following theorem is given in \cite{13, 18, 21}.

\begin{theorem}
\label{Theorem01}Let $f\in V_{1}^{\prime}$ and $u_{0}\in V_{0}$ be given. Then
for every $T>0$, there exists a weak solution $u\left(  t\right)  $ of (%
\ref{3}
)\ from the space $L^{2}(0,T;V_{1})\cap L^{\infty}\left(  0,T;V_{0}\right)  $,
such that $u\left(  x,0\right)  =u_{0}$ and $u\left(  t\right)  $ satisfies
the energy equality (%
\ref{8}
).
\end{theorem}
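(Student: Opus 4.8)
The plan is to construct the solution by the Faedo--Galerkin method and then pass to the limit using a priori estimates together with a compactness argument. First I would fix an orthonormal basis $\{w_{k}\}_{k\geq 1}$ of $V_{0}$ consisting of eigenfunctions of the Stokes operator $A$ (which are simultaneously orthogonal in $V_{1}$), and let $P_{m}$ denote the orthogonal projection in $V_{0}$ onto $\operatorname{span}\{w_{1},\dots,w_{m}\}$. I then seek a Galerkin approximation $u_{m}(t)=\sum_{k=1}^{m}g_{km}(t)w_{k}$ solving the projected system
\[
\frac{du_{m}}{dt}+\nu A u_{m}+P_{m}\hat{B}(u_{m},u_{m})=P_{m}f,\qquad u_{m}(0)=P_{m}u_{0}.
\]
Since the right-hand side is a locally Lipschitz (indeed polynomial) function of the coefficients $g_{km}$, the Cauchy--Lipschitz theorem provides a unique local solution on some interval $[0,T_{m})$.

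Next I would derive the energy estimate. Testing the Galerkin equation with $u_{m}$ and using the identity $b(u_{m},u_{m},u_{m})=0$ from (\ref{a1}), exactly as in the passage from (\ref{4}) to (\ref{5}), yields
\[
\frac{d}{dt}\left\Vert u_{m}(t)\right\Vert ^{2}+\nu\left\Vert \nabla u_{m}(t)\right\Vert ^{2}\leq\frac{\left\Vert f\right\Vert ^{2}}{\nu\lambda_{1}}.
\]
Gronwall's inequality together with Poincar\'e's inequality (\ref{2}) then bounds $\left\Vert u_{m}(t)\right\Vert ^{2}$ uniformly in $m$ and $t\in[0,T]$, which in particular forces $T_{m}=+\infty$, and integrating in time gives a uniform bound for $u_{m}$ in $L^{\infty}(0,T;V_{0})\cap L^{2}(0,T;V_{1})$. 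To control the time derivative I would estimate $\hat{B}(u_{m},u_{m})$ in $V_{1}^{\prime}$ using (\ref{2b}); combined with the previous bounds this shows $\tfrac{du_{m}}{dt}$ is bounded in $L^{4/3}(0,T;V_{1}^{\prime})$.

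Having these uniform bounds, I extract a subsequence (not relabeled) such that $u_{m}\rightharpoonup u$ weakly in $L^{2}(0,T;V_{1})$, weakly-$\ast$ in $L^{\infty}(0,T;V_{0})$, and $\tfrac{du_{m}}{dt}\rightharpoonup\tfrac{du}{dt}$ weakly in $L^{4/3}(0,T;V_{1}^{\prime})$. Because the embedding $V_{1}\hookrightarrow V_{0}$ is compact and $V_{0}\hookrightarrow V_{1}^{\prime}$ is continuous, the Aubin--Lions lemma upgrades this to strong convergence $u_{m}\to u$ in $L^{2}(0,T;V_{0})$. I then fix a test function $v=w_{j}$, integrate the Galerkin identity over $[t_{0},t]$, and pass to the limit: all linear terms converge by weak convergence, while the strong $L^{2}(0,T;V_{0})$ convergence (combined with the $L^{2}(0,T;V_{1})$ bound) lets me pass to the limit in the trilinear term $\int_{t_{0}}^{t}b(u_{m},u_{m},v)\,ds\to\int_{t_{0}}^{t}b(u,u,v)\,ds$, using the continuity estimate (\ref{2c}). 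By density of the span of the eigenfunctions in $V_{1}$ the resulting identity holds for every $v\in V_{1}$, which is precisely (\ref{8}); the claimed regularity $u\in L^{2}(0,T;V_{1})\cap L^{\infty}(0,T;V_{0})$ follows from the bounds, and the initial condition follows from weak continuity into $V_{1}^{\prime}$.

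The main obstacle is the passage to the limit in the nonlinear term, since weak convergence alone is insufficient to identify the limit of $\hat{B}(u_{m},u_{m})$; this is exactly where the Aubin--Lions compactness, yielding strong convergence in $L^{2}(0,T;V_{0})$, is indispensable. A secondary subtlety is recovering the initial datum $P_{m}u_{0}\to u_{0}$ in the limit, which requires the weak continuity $u\in C_{w}[0,T;V_{0})$ obtained from $u\in L^{\infty}(0,T;V_{0})$ together with $\tfrac{du}{dt}\in L^{4/3}(0,T;V_{1}^{\prime})$.
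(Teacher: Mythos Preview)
Your proposal is correct and follows the standard Faedo--Galerkin/Aubin--Lions argument. The paper itself does not supply a proof of this theorem but simply refers the reader to \cite{13, 14, 21}; the construction you outline is precisely the classical one given in those references (notably Temam \cite{21} and Lions \cite{13}), so your approach coincides with what the paper intends.
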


Moreover (see \cite{21}), $u(.)$ is weakly continuous from $\left[
0,T\right]  $\ into $V_{0}$, the function $u\in C_{w}\left(  \left[
0,T\right]  ;\text{ }V_{0}\right)  $ and consequently $u\left(  x,0\right)
=u_{0}\left(  x\right)  \in V_{0}.$ Let $W$ is the set of all Leray--Hopf weak
solutions $u\left(  .\right)  $ of equation (%
\ref{3}
) in the space $L^{\infty}\left(  0,\infty;V_{0}\right)  \cap L_{loc}%
^{2}[0,\infty;V_{1})$ that satisfy the following properties

\begin{itemize}
\item $\dfrac{du}{dt}\in L_{loc}^{\frac{4}{3}}(0,\infty;V_{1}^{^{\prime}})$;

\item for almost all $t$ and $t_{0}$, with $t>t_{0}>0$, inequalities (%
\ref{7}
,%
\ref{8}
) are valid.
\end{itemize}

Let $X^{0}$ denote the Fr\'{e}chet space used to define the Leray-Hopf weak
solutions. Thus%
\[
\varphi\in X^{0}=L^{\infty}\left(  0,\infty;V_{0}\right)  \cap L_{loc}%
^{2}[0,\infty;V_{1}),
\]
where $\varphi\in C_{w}[0,\infty;V_{0})$ and we let $\mathfrak{F}^{0}$ denote
a compact, translation invariant set of forcing functions $f$ in%
\[
L^{\infty}C=L^{\infty}\left(
\mathbb{R}
,L^{2}\left(  \Omega\right)  \right)  \cap C\left(
\mathbb{R}
,L^{2}\left(  \Omega\right)  \right)
\]
where the topology on the Fr\'{e}chet space $L^{\infty}C$ is the topology of
uniform convergence on bounded sets in $%
\mathbb{R}
$.

Then, we use the Leray-Hopf solutions of the 3D Navier-Stokes equations with
$\varepsilon=0$ to generate a semiflow $\pi^{0}$ on $\mathfrak{F}^{0}\times
X^{0}$, where%
\[
\pi^{0}\left(  \tau\right)  \left(  f,\varphi\right)  =\left(  f_{\tau}%
,S^{0}\left(  f,\tau\right)  \varphi\right)  \text{ for }\tau\geq0,
\]
$f_{\tau}\left(  t\right)  =f\left(  \tau+t\right)  $ and $u\left(  t\right)
=S^{0}\left(  f,t\right)  \varphi$ is the Leray-Hopf solution of the 3D
Navier-Stokes equations that satisfies $u\left(  0\right)  =S^{0}\left(
f,0\right)  \varphi=\varphi\left(  0\right)  $. By using the theory of
generalized weak solutions, as in Sell \cite{17a} or \cite{18} , we note that
$\pi^{0}$ has a trajectory attractor $\mathfrak{A}_{0}\subset\mathfrak{F}%
^{0}\times X^{0}$ see Theorem 65.12 in \cite{18}, and Chepyzhov \cite{13a, 14}.

\section{The regularized Navier-Stokes system}

Using the operators defined in the previous section, we can write the modified
system (%
\ref{1}
) in the evolution form
\begin{equation}%
\begin{array}
[c]{c}%
\partial_{t}u^{\varepsilon}+\varepsilon A^{l}u^{\varepsilon}+\nu
Au^{\varepsilon}+B\left(  u^{\varepsilon},u^{\varepsilon}\right)  =f\left(
x\right)  ,\text{in }\Omega\times\left(  0,\infty\right) \\
\text{div }u^{\varepsilon}=0,\text{ in }\Omega\times\left(  0,\infty\right)
\text{, }u^{\varepsilon}\left(  x,0\right)  =u_{0}^{\varepsilon},\text{ in
}\Omega\text{.}%
\end{array}
\label{19}%
\end{equation}
The existence and uniqueness results for initial value problem $($\ref{1}$)$
can be found in J. L. Lions \cite[Remark 6.11]{13}.\newline In three
dimensions, the following theorem collects the main result in this work

\begin{theorem}
\label{Theorem03}For $l\geq\frac{5}{4}$, $\varepsilon>0$ fixed, $f\in
\nolinebreak L^{2}\left(  0,T;V_{0}^{\prime}\right)  $ and $u_{0}%
^{\varepsilon}\in V_{0}$ be given. There exists a unique weak solution of
$($\ref{19}$)$ which satisfies $u^{\varepsilon}\in L^{2}\left(  0,T;V_{l}%
\right)  \cap L^{\infty}\left(  0,T;V_{0}\right)  ,\forall T>0.$
\end{theorem}

We introduce the following result of the convergence of $u^{\varepsilon}$ as
the regularized parameter $\varepsilon\rightarrow$ $0$

\begin{theorem}
\label{Theorem03 copy(1)} For $l\geq\frac{3}{2}$, $\varepsilon>0$ fixed,
$f\in\nolinebreak L^{2}\left(  0,T;V_{0}^{\prime}\right)  $ and $u_{0}%
^{\varepsilon}\in V_{0}$ be given.\newline i) There exists a unique weak
solution of $($\ref{19}$)$ which satisfies%
\[
u^{\varepsilon}\in L^{2}(0,T;V_{l})\cap L^{\infty}\left(  0,T;V_{0}\right)
,\text{ }\forall T>0.
\]
\newline ii) This weak solution $u^{\varepsilon}$ converges strongly in
$L^{2}(0,T;V_{0})$ as $\varepsilon\rightarrow0$ to $u$ a weak solution of the
Navier-Stokes equations.
\end{theorem}

The above theorem is established directly by using of a general result
\cite[Theorem 3.9.]{23}. For $\varepsilon>0$, we let $\pi^{\varepsilon}$
denote the semiflow on $\mathfrak{F}^{0}\times X^{0}$ generated by the weak
solutions of regularized 3D Navier-Stokes equations of (%
\ref{19}
). Thus%
\begin{equation}
\pi^{\varepsilon}\left(  \tau\right)  =\left(  f_{\tau},S^{\varepsilon}\left(
f,\tau\right)  \varphi\right)  , \label{20}%
\end{equation}
where $u_{0}^{\varepsilon}=\varphi$ and%
\begin{equation}
u^{\varepsilon}\left(  t\right)  =S^{\varepsilon}\left(  f,t\right)
\varphi=S^{\varepsilon}\left(  f,t\right)  u_{0}^{\varepsilon} \label{21}%
\end{equation}
is the weak solution of (%
\ref{19}
) that satisfies $\varphi\left(  0\right)  =u_{0}^{\varepsilon}$.

Regarding the existence of the attractor $\mathfrak{A}_{\varepsilon}$ when
$\varepsilon>0$, we use especially the related papers of Chepyzhov and Vishik,
such as \cite{15,13a} to show that the system $\left(  \text{%
\ref{19}
}\right)  $ possesses a global attractor. For $\varepsilon>0$, we consider the
trajectory space $\mathcal{K}_{\varepsilon}$ of the modified Navier-Stokes
equations (%
\ref{19}
). $\mathcal{K}_{\varepsilon}$ is the union of all weak solutions
$u^{\varepsilon}\in X^{0}$ that satisfy (%
\ref{19}
), see (6.163) in \cite{13}.

Using the described scheme in \cite{15}, we construct the spaces
$\mathcal{S}_{b}$%
\[
\mathcal{S}_{b}=\{v\left(  .\right)  \in L^{\infty}\left(  0,T;V_{0}\right)
\cap L_{b}^{2}(0,T;V_{1}),\partial_{t}v\left(  .\right)  \in L_{b}%
^{2}(0,T;D(A^{\frac{l}{2}})^{\prime})\}
\]
with norm%
\[
\left\Vert v\right\Vert _{\mathcal{S}_{b}}=\left\Vert v\right\Vert _{L_{b}%
^{2}(0,T;V_{1})}+\left\Vert v\right\Vert _{L^{\infty}\left(  0,T;V_{0}\right)
}+\left\Vert \partial_{t}v\right\Vert _{L_{b}^{2}(0,T;D(A\frac{^{l}}%
{2})^{\prime})}%
\]
where
\[
\left\Vert v\right\Vert _{L_{b}^{2}(0,T;V_{1})}=\sup\limits_{t\geq0}(\int
_{t}^{t+1}\left\Vert v\left(  s\right)  \right\Vert _{1}^{2}ds)^{\frac{1}{2}%
},\left\Vert v\right\Vert _{L^{\infty}\left(  0,T;V_{0}\right)  }%
=\operatorname*{ess}\sup\limits_{t\geq0}\left\Vert v\right\Vert
\]
and
\[
\left\Vert \partial_{t}v\right\Vert _{L_{b}^{2}(0,T;V_{^{l}}^{\prime})}%
=\sup\limits_{t\geq0}(\int_{t}^{t+1}\left\Vert v\left(  s\right)  \right\Vert
_{V_{^{l}}^{\prime}}^{2}ds)^{\frac{1}{2}}.
\]
We need a topology in the space $\mathcal{K}_{\varepsilon}$. We define on
$X^{0}$ the following sequential topology which we denote $\Gamma$.

By definition, a sequence of functions $\{v_{n}\}\subseteq$ $X^{0}$ converges
to a function $v\in X^{0}$ in the topology $\Gamma$ as $n$ $\rightarrow\infty$
if, for any $T>0$, $v_{n}\rightarrow v$ weakly in $L^{2}(0,T;V_{1})$;
$v_{n}\rightarrow v$ weak-$\ast$\ in $L^{\infty}\left(  0,T;V_{0}\right)  $
and $v_{n}\rightarrow v$ strongly in $L^{2}(0,T;V_{0})$, as $n\rightarrow
\infty$.

We consider the topology $\Gamma$ on $\mathcal{K}_{\varepsilon}$. It is easy
to prove that the space $\mathcal{K}_{\varepsilon}$ is closed in $\Gamma$.
From the definition of $\mathcal{K}_{\varepsilon}$, it follows that\ $\pi
^{\varepsilon}\mathcal{K}_{\varepsilon}\subset\mathcal{K}_{\varepsilon}$ for
all $t\geq0$.

\begin{corollary}
\label{Proposition 3}If $u^{\varepsilon}\left(  t\right)  $ is a solution of (%
\ref{19}
), then the following inequalities hold for all $t>0$%
\begin{align}
\left\Vert u^{\varepsilon}\left(  t\right)  \right\Vert ^{2}  &  \leq
e^{-\nu\lambda_{1}t}\left\Vert u_{0}^{\varepsilon}\right\Vert ^{2}%
+\dfrac{\left\Vert f\right\Vert ^{2}}{\nu^{2}\lambda_{1}^{2}},\label{1y}\\
\int_{t}^{t+1}\left\Vert u^{\varepsilon}\left(  s\right)  \right\Vert ^{2}ds
&  \leq\dfrac{e^{-\nu\lambda_{1}t}}{\nu\lambda_{1}}\left\Vert u_{0}%
^{\varepsilon}\right\Vert ^{2}+\dfrac{\left\Vert f\right\Vert ^{2}}{\nu
^{2}\lambda_{1}^{2}},\label{2y}\\
\nu\int_{t}^{t+1}\left\Vert u^{\varepsilon}\left(  s\right)  \right\Vert
_{1}^{2}ds  &  \leq\frac{e^{-\nu\lambda_{1}t}}{\nu\lambda_{1}}\left\Vert
u_{0}^{\varepsilon}\right\Vert ^{2}+\frac{\left\Vert f\right\Vert ^{2}}%
{\nu^{2}\lambda_{1}^{2}}+\dfrac{\left\Vert f\right\Vert ^{2}}{\nu\lambda_{1}}.
\label{3y}%
\end{align}

\end{corollary}

\begin{proof}
Taking the inner product of (%
\ref{19}
) by $u^{\varepsilon}$, we obtain
\begin{equation}
\frac{d}{dt}\left\Vert u^{\varepsilon}\right\Vert ^{2}+2\varepsilon\Vert
A^{\frac{l}{2}}u^{\varepsilon}\Vert^{2}+2\nu\left\Vert \nabla u^{\varepsilon
}\right\Vert ^{2}=2\left(  f,u^{\varepsilon}\right)  \text{.} \label{8y}%
\end{equation}
Applying Young's inequality and using the Poincar\'{e} Lemma, we obtain%
\begin{equation}
\frac{d}{dt}\left\Vert u^{\varepsilon}\right\Vert ^{2}+\nu\left\Vert \nabla
u^{\varepsilon}\right\Vert ^{2}\leq\frac{\left\Vert f\right\Vert ^{2}}%
{\nu\lambda_{1}}. \label{4y}%
\end{equation}
Using the Gronwall's inequality over $\left[  0,\text{ }t\right]  $, we obtain
(%
\ref{1y}
). Integrating (%
\ref{1y}
) over $\left[  t,t+1\right]  $ we find (%
\ref{2y}
). Integrating (%
\ref{4y}
) over $\left[  t,t+1\right]  $ we find
\begin{equation}
\nu\int_{t}^{t+1}\left\Vert \nabla u^{\varepsilon}\left(  s\right)
\right\Vert ^{2}ds\leq\frac{\left\Vert f\right\Vert ^{2}}{\nu\lambda_{1}%
}+\left\Vert u^{\varepsilon}\left(  t\right)  \right\Vert ^{2}. \label{9y}%
\end{equation}
Applying inequality (%
\ref{1y}
), we get (%
\ref{3y}
).
\end{proof}

We recall the following result

\begin{lemma}
Let $f\in L^{2}\left(  0,T;V_{1}^{\prime}\right)  $, then, for any solution
$u^{\varepsilon}\left(  t\right)  $ of problem $($\ref{1}$)$ the time
derivative $\dfrac{du^{\varepsilon}}{dt}$ \textit{is uniformly bounded in}
$L^{2}\left(  0,T;V_{l}^{\prime}\right)  $.
\end{lemma}

A simple consequence of Lemma 3.6 \cite{23} is the following Corollary

\begin{corollary}
Let $f\in L^{2}\left(  0,T;V_{1}^{\prime}\right)  $, then any solution
$u^{\varepsilon}\left(  t\right)  $ of (%
\ref{19}
) satisfies
\begin{equation}
\int_{t}^{t+1}\left\Vert \partial_{t}u^{\varepsilon}\left(  s\right)
\right\Vert _{D(A^{\frac{l}{2}})^{\prime}}^{2}ds\leq C_{3}, \label{5y}%
\end{equation}
$C_{3}$ is a positive constant independent of $\varepsilon$ .
\end{corollary}

Moreover, due to estimates (%
\ref{1y}
) and (%
\ref{5y}
), we also have the uniform estimate.

\begin{proposition}
Let $f\in L^{2}\left(  0,T;V_{1}^{\prime}\right)  $, then any solution
$u^{\varepsilon}\left(  t\right)  $ of (%
\ref{19}
) satisfies the inequality%
\begin{equation}
\left\Vert \pi^{\varepsilon}\left(  u^{\varepsilon}\right)  \right\Vert
_{\mathcal{S}_{b}}^{2}\leq\dfrac{c_{7}e^{-\nu\lambda_{1}t}}{\nu\lambda_{1}%
}\left\Vert u^{\varepsilon}\left(  0\right)  \right\Vert ^{2}+\dfrac
{c_{7}\left\Vert f\right\Vert ^{2}}{\nu^{2}\lambda_{1}^{2}}+C_{4} \label{7y}%
\end{equation}
where the positive constant\ $C_{4}$ is independent of $\varepsilon$.
\end{proposition}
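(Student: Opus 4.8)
The plan is to estimate the $\mathcal{S}_b$-norm termwise. By definition $\left\Vert v\right\Vert_{\mathcal{S}_b}$ is the sum of the three seminorms $\left\Vert v\right\Vert_{L_b^2(0,T;V_1)}$, $\left\Vert v\right\Vert_{L^\infty(0,T;V_0)}$ and $\left\Vert \partial_t v\right\Vert_{L_b^2(0,T;D(A^l)')}$, so using the elementary inequality $(a+b+c)^2\leq 3(a^2+b^2+c^2)$ with $v=u^\varepsilon$ it suffices to bound each squared seminorm separately and then add, absorbing the numerical factors into $c_7$ and the $\varepsilon$-independent remainders into $C_3$. All three seminorms will be controlled by the a priori estimates (\ref{1y}), (\ref{3y}) and (\ref{5y}) already at hand; taking the suprema over the shifted tail $[t,\infty)$ is what produces the decaying factor $e^{-\nu\lambda_1 t}$ attached to the initial data.

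For the $V_1$-seminorm I would divide (\ref{3y}) by $\nu$ to control $\int_s^{s+1}\left\Vert u^\varepsilon\right\Vert_1^2\,d\sigma$ uniformly for $s\geq t$, which yields a contribution of the form $\frac{e^{-\nu\lambda_1 t}}{\nu^2\lambda_1}\left\Vert u_0^\varepsilon\right\Vert^2$ plus stationary multiples of $\left\Vert f\right\Vert^2$. For the $L^\infty(0,T;V_0)$-seminorm, (\ref{1y}) gives directly $\left\Vert u^\varepsilon(s)\right\Vert^2\leq e^{-\nu\lambda_1 s}\left\Vert u_0^\varepsilon\right\Vert^2+\frac{\left\Vert f\right\Vert^2}{\nu^2\lambda_1^2}$, whose essential supremum over $s\geq t$ carries the same decaying initial-data term and the steady $\frac{\left\Vert f\right\Vert^2}{\nu^2\lambda_1^2}$. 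Choosing $c_7$ large enough to absorb the factor $3$ and the various powers of $\nu$ and $\lambda_1$ appearing in these bounds collapses these two contributions into the first two terms on the right-hand side of (\ref{7y}).

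The time-derivative seminorm is supplied verbatim by (\ref{5y}), namely $\int_s^{s+1}\left\Vert \partial_t u^\varepsilon\right\Vert^2_{D(A^l)'}\,d\sigma\leq C_2$ with $C_2$ independent of $\varepsilon$, so this whole seminorm contributes only to the constant $C_3$ (together with the stationary $\left\Vert f\right\Vert^2$-remainder coming from (\ref{3y})). The only point that genuinely requires care, and hence the main obstacle, is the $\varepsilon$-uniformity of $C_3$: the hyperviscous term $\varepsilon A^l u^\varepsilon$ is unbounded in strong norms, but since $\partial_t u^\varepsilon$ is measured in the weak space $D(A^l)'$, pairing (\ref{19}) against $w\in D(A^l)$ keeps $\varepsilon$ as a multiplicative factor on a quantity controlled by the dissipation integral $\varepsilon\int\left\Vert A^{l/2}u^\varepsilon\right\Vert^2$ already bounded in (\ref{14}); for $\varepsilon\in(0,1]$ this stays uniformly bounded, which is exactly the mechanism behind Lemma~3.6 of \cite{23} underlying (\ref{5y}). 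Consequently every constant in (\ref{7y}) is independent of $\varepsilon$, as claimed.
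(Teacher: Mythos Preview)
Your proposal is correct and matches the paper's own argument, which is extremely terse: the paper simply states that (\ref{7y}) follows ``due to estimates (\ref{1y}) and (\ref{5y})'' without writing out any details. You have supplied exactly those details---bounding each of the three seminorms in $\mathcal{S}_b$ by (\ref{1y}), (\ref{3y}) and (\ref{5y}) respectively, then combining via $(a+b+c)^2\le 3(a^2+b^2+c^2)$---so your approach is the same, only made explicit.
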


\begin{proposition}
For $l\geq\frac{3}{2}$ and $f\in L^{\infty}C$ a time independent functions,
$\pi^{\varepsilon}$ is a continuous family of semiflows on $\ X^{0}$.
\end{proposition}

\begin{proof}
Since $u^{\varepsilon}\in L^{2}\left(  0,T;V_{l}\right)  $ and $\dfrac
{du^{\varepsilon}}{dt}\in L^{2}\left(  0,T;V_{l}^{\prime}\right)  $,
$u^{\varepsilon}$ is almost everywhere equal to an uniform continuous function
from $[0,T]$ to the space\ $V_{0}$. The continuity of $u^{\varepsilon}$\ is a
direct consequence of \cite[Lemma 1.4. ChIII, Sec1]{21}.

From the result of the strong convergence, there exists $\varepsilon_{1}>0$,
such that
\begin{equation}
\left\Vert u^{\varepsilon}\left(  t\right)  -u^{\varepsilon_{0}}\left(
t\right)  \right\Vert \leq\epsilon,\forall\epsilon\geq0,\text{ for each
}\varepsilon\leq\varepsilon_{1},\label{06}%
\end{equation}
it follows from $\left(  \text{%
\ref{06}
}\right)  $ that $\lim\limits_{\varepsilon\rightarrow\varepsilon_{0}%
}\left\Vert \pi^{\varepsilon}\left(  t\right)  -\pi^{\varepsilon_{0}}\left(
t\right)  \right\Vert $ goes to $0$, for all $0\leq t\leq T$.

This shows that $\pi^{\varepsilon}$ is continuous semiflow on $X^{0}$ and
$\pi^{\varepsilon}$ approximates $\pi^{0}$ uniformly for $t$ in compact sets
in $\left[  0,\infty\right)  $.
\end{proof}

From Proposition 4.6 it follows that $\mathcal{K}_{\varepsilon}\subset
\mathcal{S}_{b}$ for all $\varepsilon>0$ and for all $\tau>0$. Also
Proposition 4.6 implies that the semigroup $\pi^{\varepsilon}$ has absorbing
set in $\mathcal{K}_{\varepsilon}$ for all $\varepsilon>0$ and for all
$\tau>0$ (We note, that this absorbing set does not depend on $\varepsilon$,
since the constant $C_{4}$ in (%
\ref{7y}
) is independent of $\varepsilon$), bounded in $\mathcal{S}_{b}$ and
inequality (%
\ref{7y}
) implies that absorbing set is compact in $\Gamma$. The continuity of
$\pi^{\varepsilon}$ is proved. These facts are sufficient to state that
$\pi^{\varepsilon}$ has a global attractor $\mathfrak{A}_{\varepsilon}$. Such
that $\mathfrak{A}_{\varepsilon}\subset\mathfrak{F}^{0}\times X^{0}$, bounded
in $\mathcal{S}_{b}$ and compact in $\Gamma$. For a more detailed, see
\cite{13a, 14, 15}.

\section{Upper semicontinuity of attractors}

We now prove the robustness property for the global attractor $\mathfrak{A}%
_{\varepsilon}$. We have shown in Proposition 4.7 the continuity of the family
of semiflows $\pi^{\varepsilon}$ on $X^{0}$. Having done this, We can simply
invoke Theorem 23.14 in \cite{18} to complete the proof of the robustness for
the family of attractors $\mathfrak{A}_{\varepsilon}$ at $\varepsilon=0$. We
denote by
\[
B_{R}=\{u^{\varepsilon}\left(  x,t\right)  ,0\leq t,0<\varepsilon<1;\left\Vert
\pi^{\varepsilon}\left(  u^{\varepsilon}\right)  \right\Vert _{\mathcal{S}%
_{b}}^{2}\leq R\}
\]
with $R=\dfrac{c_{7}e^{-\nu\lambda_{1}t}}{\nu\lambda_{1}}\left\Vert
u^{\varepsilon}\left(  0\right)  \right\Vert ^{2}+\dfrac{c_{7}\left\Vert
f\right\Vert ^{2}}{\nu^{2}\lambda_{1}^{2}}+C_{4}$, $u^{\varepsilon}\left(
x,t\right)  $ is a family of solutions of system (%
\ref{19}
), and the norms of $u^{\varepsilon}\left(  x,t\right)  $ in $\mathcal{S}_{b}$
are uniformly bounded, see Proposition 4.7. It is sufficient to show that a
small $\delta-$neighbourhood of attractor $\mathfrak{A}_{0}$ is an absorbing
set and $\pi^{\varepsilon}$ approximates $\pi^{0}$ on $B_{R}$ uniformly for
all $t$ in $\left[  0,\infty\right)  $, see \cite{8b, 18}.

\begin{theorem}
\label{main}For $l\geq\frac{3}{2}$, for $\varepsilon>0$ the family of
semiflows $\pi^{\varepsilon}$ generated by the weak solutions of the
regularized 3D Navier-Stokes equations (%
\ref{19}
) admits a compact attractor $\{\mathfrak{A}_{\varepsilon},$ $0<\varepsilon
\leq1\}$ which attracts bounded sets of $V_{0}$ and is contained in the
absorbing balls $B_{R}$ where $R$ is independent of $\varepsilon$.
Moreover,\ $d_{X^{0}}\left(  \mathfrak{A}_{\varepsilon},\mathfrak{A}%
_{0}\right)  \rightarrow0,$ as $\varepsilon\rightarrow0$.
\end{theorem}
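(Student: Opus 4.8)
The plan is to assemble the three ingredients already established in Section 4 and feed them into the abstract Robustness Theorem 23.14 of Sell--You \cite{18}. The theorem asserts two things: first, the existence of a trajectory attractor $\mathfrak{A}_\varepsilon$ for each $\varepsilon>0$ together with uniform bounds on its location; second, the upper semicontinuity $d_{X^0}(\mathfrak{A}_\varepsilon,\mathfrak{A}_0)\to 0$. I would treat these in turn, since the existence half is already almost complete in the text and only needs to be stated, while the convergence half is where the abstract machinery does the real work.

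For the existence and uniform localization of $\mathfrak{A}_\varepsilon$, I would simply collect the absorbing-ball estimates derived earlier. The inequality \eqref{15} (equivalently \eqref{17}) gives an absorbing ball $B_{\delta_0}\subset V_0$ whose radius $\rho_0=\tfrac{1}{\nu\lambda_1}\|f\|$ is manifestly independent of $\varepsilon$ (the terms carrying $\varepsilon$ were dropped while keeping the inequality valid). Likewise the $V_1$-estimate culminating in \eqref{22} produces an absorbing ball $B_{\delta_1}\subset V_1$ with $\varepsilon$-independent radius $R_1$, again because the hyperviscous contributions $2\varepsilon\|A^{(l+1)/2}u_\varepsilon\|^2$ only help the dissipation and are discarded. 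Combined with the compact embedding $V_1\hookrightarrow V_0$, these give the uniform compactness needed for the standard attractor-existence theorem (\cite[Theorem I.1.1]{20}), so $\mathfrak{A}_\varepsilon\subset B_{R_0}\cap B_{R_1}$ with both radii uniform in $\varepsilon$. The trajectory-space reformulation via $\mathcal{S}_b$ and the topology $\Gamma$, together with Propositions~\ref{Proposition 3} and the estimate \eqref{7y} (whose constant $C_3$ is $\varepsilon$-independent), then upgrades this to the existence of a trajectory attractor bounded in $\mathcal{S}_b$ and compact in $\Gamma$, exactly as recorded at the close of Section~4.

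For the upper semicontinuity, the key hypothesis to verify is that $\pi^\varepsilon$ approximates $\pi^0$ on the absorbing set $B_{R_1}$, uniformly on compact time intervals. This is precisely the content of Theorem~\ref{cont}: the difference estimate \eqref{c14}, namely $\|w_n(t)\|^2\le C_1\|w_n(0)\|^2+\tfrac{2TC_1}{\nu}\|g_n\|^2$, shows that as $\varepsilon_n\to 0$, $\varphi^n\to\varphi^0$ and $f^n\to f^0$ one has $S^{\varepsilon_n}(f^n,t)\varphi^n\to S^0(f^0,t)\varphi^0$ uniformly on $[0,T]$. I would invoke this convergence together with the $\varepsilon$-independent absorbing set from the previous paragraph: these are exactly the two conditions flagged in the paragraph preceding the theorem statement (a uniform neighbourhood of $\mathfrak{A}_0$ is absorbing, and $\pi^\varepsilon$ approximates $\pi^0$ on $B_{R_1}$). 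With both verified, Theorem 23.14 of \cite{18} applies verbatim and yields $d_{X^0}(\mathfrak{A}_\varepsilon,\mathfrak{A}_0)\to 0$.

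**The hard part** is not any single calculation but ensuring the hypotheses of the abstract robustness theorem are genuinely met in the trajectory-attractor setting where uniqueness fails. The delicate point is that $S^\varepsilon$ need not be single-valued for the limiting 3D Navier--Stokes flow, so the ``approximation'' must be understood in the $\Gamma$-topology on trajectories rather than as a pointwise semigroup convergence; one must check that the convergence in \eqref{c14} (an $L^2(0,T;V_0)$-strong estimate) is compatible with the weak-$*$ and weak convergences defining $\Gamma$, and that the uniform $\mathcal{S}_b$-bound \eqref{7y} supplies the compactness needed to pass to trajectory limits. Once it is confirmed that the difference estimate holds uniformly over the $\varepsilon$-independent absorbing ball $B_{R_1}$ and that the constant $C_1=\exp\bigl(T\,c_1^4 R_1^4/\nu^3\bigr)$ does not degenerate, the semicontinuity follows mechanically from the cited abstract result.
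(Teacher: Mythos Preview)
Your proposal is correct and follows essentially the same approach as the paper: you assemble the $\varepsilon$-independent absorbing balls in $V_0$ and $V_1$ from Section~4, invoke the continuity result of Theorem~\ref{cont} for the uniform approximation of $\pi^0$ by $\pi^\varepsilon$ on $B_{R_1}$, and then apply the abstract Robustness Theorem~23.14 of \cite{18}. The only cosmetic difference is that the paper, rather than citing Theorem~23.14 as a black box, writes out its mechanism explicitly---showing that $N_{\varepsilon_0}(\mathfrak{A}_0)$ is absorbing for $\pi^0$, that $\pi^\varepsilon(B_{R_0}\cap B_{R_1})\subset N_{\varepsilon_0}(\mathfrak{A}_0)$ for small $\varepsilon$ and large $t$, and then using the invariance of $\mathfrak{A}_\varepsilon$ to conclude $\mathfrak{A}_\varepsilon\subset N_{\varepsilon_0}(\mathfrak{A}_0)$---but this is precisely the content of the cited abstract theorem, so the two proofs coincide in substance.
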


\begin{proof}
Since $\mathfrak{A}_{0}$ is a global attractor, for any bounded set $B_{R_{0}%
}=\{u\left(  x,0\right)  \in V,\left\Vert u\left(  x,0\right)  \right\Vert
\leq R_{0}\}\subset V$, we have%
\begin{equation}
d_{X^{0}}\left(  \pi^{0}B_{R_{0}},\mathfrak{A}_{0}\right)  \rightarrow0\text{,
as }t\rightarrow\infty\text{.}\label{8h}%
\end{equation}
Thus, there exists $\delta>0$ such that%
\begin{equation}
d_{X^{0}}\left(  \pi^{0}B_{R_{0}},\mathfrak{A}_{0}\right)  \leq\frac{\delta
}{2},\text{ for }t\geq t_{\delta}\text{.}\label{7h}%
\end{equation}
Consequently%
\begin{equation}
\pi^{0}\left(  t\right)  B_{R_{0}}\subset N_{\delta}(\mathfrak{A}_{0})\text{,
for }t\geq t_{\delta}\text{,}\label{1h}%
\end{equation}
where $N_{\delta}(\mathfrak{A}_{0})$ be the $\delta$-neighborhood of
$\mathfrak{A}_{0}$. This shows that $N_{\delta}(\mathfrak{A}_{0})$ is an
absorbing set. Since $\pi^{\varepsilon}$ approximates $\pi^{0}$ uniformly for
all $t\geq0$, then for any $\delta>0$, there are $\varepsilon_{1}>0$ and
$t_{0}\geq0$ such that%
\begin{equation}
\pi^{\varepsilon}\left(  B_{R}\cap B_{R_{0}}\right)  \subset N_{\delta
}(\mathfrak{A}_{0})\text{, for }0<\varepsilon<\varepsilon_{1},\text{ }t\geq
t_{0}\text{.}\label{3h}%
\end{equation}
Since the attractor $\mathfrak{A}_{\varepsilon}\ $is contained in $B_{R}\cap
B_{R_{0}}$, we have%
\begin{equation}
\pi^{\varepsilon}(\mathfrak{A}_{\varepsilon})\subset N_{\delta}(\mathfrak{A}%
_{0})\text{, for }\varepsilon\leq\varepsilon_{1},\text{ }t\geq t_{0}%
\text{.}\label{4h}%
\end{equation}
Since $\mathfrak{A}_{\varepsilon}$ is an invariant set, we deduce that%
\begin{equation}
\mathfrak{A}_{\varepsilon}\subset N_{\delta}(\mathfrak{A}_{0})\text{, for
}0<\varepsilon<\varepsilon_{1},\text{ }t\geq t_{0}\text{.}\label{5h}%
\end{equation}
Moreover, since $\delta$ is arbitrary, we obtain the upper semicontinuity of
$\mathfrak{A}_{\varepsilon}$, at $\varepsilon_{0}=0$%
\begin{equation}
d_{X^{0}}\left(  \mathfrak{A}_{\varepsilon},\mathfrak{A}_{0}\right)
\rightarrow0,\text{ as }\varepsilon\in O\left(  \varepsilon_{0}\right)
.\label{6h}%
\end{equation}

\end{proof}

One can modify the argument described above so that the final result will have
broader applicability by allowing the family of forcing functions to vary with
$\varepsilon$, for $\varepsilon>0$. Thus, we consider the regularized
Navier-Stokes system (%
\ref{19}
) with a perturbed external force $f^{\varepsilon}$ in place of $f$, for
$\varepsilon>0$. Then (%
\ref{19}
) becomes%
\begin{equation}%
\begin{array}
[c]{r}%
\partial_{t}u^{\varepsilon}+\varepsilon A^{l}u^{\varepsilon}+\nu
Au^{\varepsilon}+B\left(  u^{\varepsilon},u^{\varepsilon}\right)
=f^{\varepsilon}\left(  x\right)  ,\text{in }\Omega\times\left(
0,\infty\right)  \\
\text{div }u^{\varepsilon}=0,\text{ in }\Omega\times\left(  0,\infty\right)
\text{, }u^{\varepsilon}\left(  x,0\right)  =u_{0}^{\varepsilon},\text{ in
}\Omega\text{.}%
\end{array}
\label{13y}%
\end{equation}
We show that the trajectory attractor of the perturbed system (%
\ref{13y}
) coincides with the trajectory attractor $\mathfrak{A}_{\varepsilon}$ of the
unperturbed system (%
\ref{3}
). Our results rely on the work of Hale (\cite{15}) who show that the limit
behaviour is valid even through $\mathfrak{F}^{\varepsilon}$, where
$\mathfrak{F}^{\varepsilon}$ denote a compact, translation invariant set of
perturbed forcing functions to vary with $\varepsilon$, for $\varepsilon>0$
and satisfy the condition%
\begin{equation}
\omega(\mathcal{H}^{+}\left(  f^{\varepsilon}\right)  )=\omega(\mathcal{H}%
^{+}\left(  f\right)  ).\label{14y}%
\end{equation}
Thus we would use $\mathfrak{F}^{\varepsilon}$ in place of $\mathfrak{F}^{0}$,
for $\varepsilon>0$. Moreover, by using a metric $d$ on the $L^{\infty}%
C$-toplogy, see \cite{18} for some samples, we can note that (%
\ref{14y}
) is equivalent to saying that for every $\delta>0$ there is an $\varepsilon
_{1}>0$ and $T_{\delta}=T\left(  \delta\right)  \geq0$ such that
\begin{equation}
d_{X^{0}}(f^{\varepsilon},\mathfrak{F}^{0})\leq\delta,\text{ for
}0<\varepsilon\leq\varepsilon_{2}\text{ and }f^{\varepsilon}\in\mathfrak{F}%
^{\varepsilon}\label{16y}%
\end{equation}
for any $t\geq T_{\delta}$, that is
\begin{equation}
\mathfrak{F}^{\varepsilon}\subset N_{\delta}\left(  \mathfrak{F}^{0}\right)
\text{, for }0<\varepsilon\leq\varepsilon_{2},\label{15y}%
\end{equation}
where $N_{\delta}$ denotes the $\delta$-neighborhood of $\mathfrak{F}^{0}$ in
$L^{\infty}C$. The resulting argument for robustness will then depend on two
parameters $\lambda=(\varepsilon,\delta)$, where $\lambda\rightarrow(0,0)$.

The following statement generalizes Theorem
\ref{main}

\begin{theorem}
Under the above conditions, the trajectory attractor of the perturbed 3D
Navier-Stokes system (%
\ref{13y}
) coincides with the trajectory attractor $\mathfrak{A}_{\varepsilon}$ of the
non-perturbed system (%
\ref{3}
). Moreover, the perturbed attractor of (%
\ref{13y}
) is upper semicontinuous with respect to $\varepsilon$ at $\varepsilon=0$.
\end{theorem}

\begin{proof}
The existence of trajectory attractor $\mathfrak{A}_{\varepsilon}$ is treated
above. The proof follows from formulas (%
\ref{14y}
), (%
\ref{15y}
) and Theorem
\ref{main}
.
\end{proof}

Proposition 4.7 can be used to extend the 2D result of Ou and
Sritharan\cite{16} to $l$-lplacian with $l>1$.

\end{document}